\renewcommand{\email}[1]{\emailname: #1} 
\renewenvironment{proof}{\noindent{\itshape Proof.}}{\smartqed\qed}
\DeclareSymbolFont{bbold}{U}{bbold}{m}{n}
\DeclareSymbolFontAlphabet{\mathbbold}{bbold}
\begin{document}

\title*{ON QUASI-ENERGY-SPECTRA, PAIR CORRELATIONS OF SEQUENCES AND ADDITIVE COMBINATORICS}
\titlerunning{Quasi-enery-spectra, pair correlations, and additive combinatorics}

\author{Ida Aichinger \and Christoph Aistleitner \and Gerhard Larcher}

\institute{
 Ida Aichinger
 \at CERN, European Organization for Nuclear Research, 385 Route de Meyrin, CH-1217 Meyrin, Switzerland\\
 \email{ida.aichinger@cern.ch}
 \and
 Christoph Aistleitner
 \at TU Graz, Institute for Analysis and Number Theory, Steyrergasse 30, 8010 Graz, Austria \\
 \email{aistleitner@math.tugraz.at}
\and
Gerhard Larcher
 \at Johannes Kepler University Linz, Institute for Financial Mathematics and Applied Number Theory, Altenbergerstra{\ss}e 69, 4040 Linz, Austria\\
 \email{gerhard.larcher@jku.at}
}

\maketitle

\index{Aichinger, Ida}
\index{Aistleitner, Christoph}
\index{Larcher, Gerhard}

\abstract{The investigation of the pair correlation statistics of sequences was initially motivated by questions concerning quasi-energy-spectra of quantum systems. However, the subject has been developed far beyond its roots in mathematical physics, and many challenging number-theoretic questions on the distribution of the pair correlations of certain sequences are still open. We give a short introduction into the subject, recall some known results and open problems, and in particular explain the recently established connection between the distribution of pair correlations of sequences on the torus and certain concepts from additive combinatorics. Furthermore, we slightly improve a result recently given by Jean Bourgain in~\cite{ALL+B}.
}

\section{INTRODUCTION}\label{sec:1}

Some of Ian Sloan's first published papers dealt with topics from mathematical physics, in particular with theoretical nuclear physics. Later he moved his area of research to applied mathematics and numerical analysis, and  in particular Ian's ground-breaking work on complexity theory, numerical integration and mathematical simulation is well-known and highly respected among the scientific community of mathematicians. The techniques developed and analyzed by Ian in these fields are often based on the use of deterministic point sets and sequences with certain ``nice'' distribution properties, a method which is nowadays widely known under the name of \emph{quasi-Monte Carlo method} (QMC). In the present paper we will combine these two topics, mathematical physics and the distribution of point sets.\\

Ian's first research paper appeared 1964 in the Proceedings of the Royal Society (London), entitled ``The method of polarized orbitals for the elastic scattering of slow electrons by ionized helium and atomic hydrogen'' \cite{Sloan}. In the same journal, but thirteen years later, Berry and Tabor published a groundbreaking paper on ``Level clustering in the regular spectrum'' \cite{BT}. This paper deals with the investigation of conservative quantum systems that are chaotic in the classical limit. More precisely, the paper deals with statistical properties of the energy spectra of these quantum systems, and Berry and Tabor conjectured that for the distribution function of the spacings between neighboring levels of a generic integrable quantum system the exponential Poisson law holds. That means, roughly speaking, the following.\\

Let $H$ be the Hamiltonian of a quantum system and let $\lambda_{1} \leq \lambda_{2} \leq \ldots$ be its discrete energy spectrum. We call the numbers $\lambda_{i}$ the \emph{levels} of this energy spectrum. If it is assumed that 
$$
\# \left\{i:~ \lambda_{i} \leq x \right\} \sim c x^{\gamma}
$$
for $x \rightarrow \infty$ and some constants $c > 0, ~\gamma \geq 1$, then consider $X_{i} := c \lambda_{i}^{\gamma}$. The Berry--Tabor conjecture now states that if the Hamiltonian is classically integrable and ``generic'', then the $X_{i}$ have the same local statistical properties as independent random variables coming from a Poisson process. Here the word ``generic'' is a bit vague; it essentially means that one excludes the known obvious (and less obvious) counterexamples to the conjecture. For more material on energy spectra of quantum systems and the following two concrete examples see the original paper of Berry and Tabor \cite{BT} as well as \cite{Bleher,Casati,Marklof} and Chapter 2 in \cite{Bogo}. For a survey on the Berry--Tabor conjecture see \cite{mark2}.\\

Two basic examples of quantum systems are the two-dimensional ``harmonic oscillator'' with Hamiltonian 
$$
H = p^{2}_{x} + p_{y}^{2} + w^{2} \left(x^{2}+y^{2}\right)
$$
and the ``boxed oscillator''. This is a particle constrained by a box in $x$-direction and by a harmonic potential in $y$-direction; the Hamiltonian in this case is given by 
$$
H = -p^{2}_{x} - p^{2}_{y} + w^{2} y^{2}.
$$
The investigation of the distribution of the energy levels in these two examples leads to the investigation of the pair correlation statistics of certain sequences $\left(\theta_{n}\right)_{n \geq 1}$ in the unit interval. More specifically, one is led to study the pair correlations of the sequence $\left(\left\{n \alpha\right\}\right)_{n \geq  1}$ in the case of the $2$-dimensional harmonic oscillator, and the pair correlations of the sequence $\left(\left\{n^{2} \alpha\right\}\right)_{n \geq 1}$ in the case of the boxed oscillator; here, and in the sequel, we write $\{ \cdot \}$ for the fractional part function. In particular, for these sequences one is led to study the quantity $R_2$, which is introduced below.\\

Let $(\theta_n)_{n \geq 1}$ be a sequence of real numbers in $[0,1]$, and let $\| \cdot \|$ denote the distance to the nearest integer. For every interval $[-s,s]$ we set
$$
R_2 \big( [-s,s],(\theta_n)_{n \geq 1}, N \big) = \frac{1}{N} \# \left\{ 1 \leq j \neq k \leq N: \left\|\theta_j - \theta_k \right\| \leq \frac{s}{N} \right\}.
$$
The subscript ``2'' of ``$R_2$'' refers to the fact that these are the \emph{pair} correlations, that is, the correlations of order 2 -- in contrast to triple correlations or correlations of even higher order. Note that the average spacing between two consecutive elements of $\{\theta_1, \dots, \theta_N\}$ (understood as a point set on the torus) is $1/N$, and thus for an ``evenly distributed'' point set one would expect to find roughly $2s$ other points within distance $[-s/N,s/N]$ around a given point $\theta_j$, causing $R_2([-s,s],(\theta_n)_{n \geq 1}, N)$ to be approximately $2s$ for such a point set (after summing over all elements of the point set and then normalizing with division by $N$). Actually, for a sequence of independent, $[0,1]$-uniformly distributed random variables $\theta_1, \theta_2, \dots$ one can easily show that for every $s \geq 0$ we have
$$
R_2 ( [-s,s],(\theta_n)_{n \geq 1}, N) \to 2s,
$$
almost surely. If this asymptotic relation holds for the distribution of pair correlations of a certain sequence we say that the distribution of the pair correlations is asymptotically \emph{Poissonian}. Informally speaking, a sequence whose distribution of the pair correlations is asymptotically Poissonian may be seen as a sequence showing ``random'' behavior, and the investigation of the asymptotic distribution of the pair correlations of a deterministic sequence may be seen as studying the pseudo-randomness properties of this sequence.\\

The systematic investigation of the asymptotic distribution of the pair correlation of sequences on the torus (motivated by the applications in quantum physics) was started by Rudnick and Sarnak in \cite{R-S} for the case of sequences of the form $\left(\left\{n^{d} \alpha\right\}\right)_{n \geq 1}$ for integers $d \geq 1$. In the case $d=1$, the distribution of the pair correlations is \emph{not} asymptotically Poissonian (independent of the value of $\alpha$); this was remarked for example in \cite{R-S} with a hint to the well-known \emph{Three Distance Theorem}, which goes back to {\'S}wierczkowski and S\'os~\cite{sos}. For $d \geq 2$ the distribution of the pair correlations is asymptotically Poissonian for almost all $\alpha$, which has been proved by Rudnick and Sarnak~\cite{R-S}. The case $d=2$ (which corresponds to the energy levels of the \emph{boxed oscillator}) has received particular attention; see for example~\cite{hp,mse,rsz,td}. A generalization from $(\{n^d \alpha\})_{n \geq 1}$ 
to the case of $(\{a(n) \alpha\})_{n \geq 1}$ with $a(x) \in \mathbb{Z}[x]$ is obtained in~\cite{bzp}; again the pair correlations are asymptotically Poissonian for almost all $\alpha$, provided that the degree of $a(x)$ is at least 2. Another case which has been intensively investigated is that of  $(\left\{a(n) \alpha \right\})_{n \geq 1}$ for $(a(n))_{n \geq 1}$ being a \emph{lacunary} sequence; see for example~\cite{bpt,clz,RZ}.\\

In \cite{ALL+B} a general result was proved which includes earlier results (polynomial sequences, lacunary sequences, and sequences satisfying certain Diophantine conditions) and gives a unifying explanation. This result links the distribution of the pair correlations of the sequence $\left(\left\{a(n) \alpha\right\}\right)_{n \geq 1}$ to the additive energy of the truncations of the integer sequence $\left(a(n)\right)_{n \geq 1}$, a well-known concept from additive combinatorics which has been intensively studied. Recall that the additive energy $E(A)$ of a set of real numbers $A$ is defined as
\begin{equation}
E(A) := \sum_{a+b=c+d} 1,
\end{equation}
where the sum is extended over all quadruples (a, b, c, d) $\in A^{4}$. Trivially one has the estimate $\left|A\right|^{2} \leq E(A) \leq \left|A\right|^{3}$, assuming that the elements of $A$ are distinct. The additive energy of sequences has been extensively studied in the combinatorics literature. We refer the reader to \cite{Tao} for a discussion of its properties and applications. To simplify notations, in the sequel whenever a sequence $A:= \left(a(n)\right)_{n \geq 1}$ is fixed we will abbreviate $R_{2} \left(s, \alpha, N \right)$ for $R_{2} \left(\left[-s, s\right], \left( \{a (n) \alpha\}\right)_{n \geq 1}, N\right)$. Furthermore we will let $A_{N}$ denote the first $N$ elements of $A$. The result states that if the truncations $A_{N}$ of an integer sequence $A$ satisfy $E\left(A_{N}\right) \ll N^{3-\varepsilon}$ for some $\varepsilon > 0$, then $\left( \{a (n) \alpha \} \right)_{n \geq 1}$ has (asymptotically) Poissonian pair correlations for almost all $\alpha$. More precisely, the following theorem is true.
\begin{theorem} \label{th1}
Let $(a(n))_{n \geq 1}$ be a sequence of distinct integers, and suppose that there exists a fixed constant $\varepsilon > 0$ such that
\begin{equation}
E \left(A_{N}\right) \ll N^{3- \varepsilon}\quad\text{as}~N \rightarrow \infty.
\end{equation}
Then for almost all $\alpha$ one has
\begin{equation}
R_{2} \left(s, \alpha, N \right) \rightarrow 2s \quad \text{as}~N \rightarrow \infty
\end{equation}
for all $s \geq 0$.
\end{theorem}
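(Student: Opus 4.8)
The plan is to establish the convergence $R_2(s,\alpha,N) \to 2s$ via a second-moment argument for the associated counting function, viewing $\alpha$ as random on $[0,1]$. First I would observe that it suffices to prove the convergence along a suitable subsequence $N_j$ (say $N_j = j^{\kappa}$ for an appropriate exponent $\kappa$) together with a monotonicity/sandwiching argument to fill the gaps, and that by a diagonal argument over a countable dense set of values $s$ it suffices to fix $s$ and prove convergence for almost all $\alpha$. Fixing $s$ and $N$, I would study the quantity
\begin{equation*}
R_2(s,\alpha,N) = \frac{1}{N} \sum_{1 \le j \neq k \le N} \sum_{m \in \mathbb{Z}} \mathbf{1}_{[-s/N,s/N]}\big( (a(j)-a(k))\alpha - m \big),
\end{equation*}
and replace the sharp indicator by smooth trigonometric-polynomial minorants and majorants (Selberg/Beurling-type or simple Fejér-kernel approximations), so that $R_2$ is sandwiched between two expressions that are genuine trigonometric sums in $\alpha$ with explicitly controlled Fourier coefficients supported on frequencies up to roughly $N/s$.

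The core of the argument is then a variance estimate. Writing $f_N(\alpha)$ for the smoothed version of $R_2(s,\alpha,N)$, I would compute $\int_0^1 f_N(\alpha)\, d\alpha$ and show it tends to $2s$ (this is the ``expected value'' computation, where the main term comes from the zero Fourier mode and the off-diagonal contributions are negligible), and then compute $\int_0^1 |f_N(\alpha) - 2s|^2\, d\alpha$. Expanding the square produces a sum over pairs of differences $(a(j)-a(k))$ and $(a(j')-a(k'))$, and the integral over $\alpha$ forces a near-resonance condition $\ell (a(j)-a(k)) = \ell'(a(j')-a(k'))$ among the relevant frequency pairs $\ell,\ell' \lesssim N/s$; the number of solutions of this is bounded, after summing over $\ell,\ell'$, by a quantity controlled by the additive energy $E(A_N)$ of the truncation (this is exactly the point where the hypothesis enters: the count of quadruples $(j,k,j',k')$ with $a(j)-a(k) = a(j')-a(k')$ is precisely $E(A_N)$). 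Using $E(A_N) \ll N^{3-\varepsilon}$, the variance along the subsequence $N_j$ is seen to be summable (here is where the choice of $\kappa$ in $N_j = j^\kappa$ matters — one needs $\sum_j N_j^{-\varepsilon'} < \infty$ for the relevant $\varepsilon'$ depending on $\varepsilon$), so by the Borel--Cantelli lemma $f_{N_j}(\alpha) \to 2s$ for almost every $\alpha$.

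Finally, I would transfer the result from the smoothed sums back to $R_2$ itself by letting the smoothing parameter (degree of the approximating polynomials) tend to infinity through a countable sequence, and transfer from the subsequence $N_j$ to all $N$ by a monotonicity argument: between consecutive $N_j$ and $N_{j+1}$ the counting function cannot jump by too much because $N_{j+1}/N_j \to 1$, and $R_2$ restricted to $[-s,s]$ can be bounded above and below by $R_2$ at the endpoints of the subsequence with slightly dilated values of $s$ and correspondingly adjusted normalization. I expect the main obstacle to be the bookkeeping in the variance computation — in particular, handling the contribution of the ``diagonal'' terms where indices partially coincide (e.g.\ $j=j'$ but $k\neq k'$), which must be shown to be lower-order, and carefully tracking how the error terms depend jointly on $N$, on $s$, and on the smoothing degree so that all three limits can be taken in the correct order. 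The additive-combinatorial input itself is clean once the reduction to counting quadruples $a(j)-a(k)=a(j')-a(k')$ is in place; the delicate part is the analytic scaffolding around it.
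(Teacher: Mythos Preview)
The paper itself does not prove Theorem~\ref{th1}: it is quoted from~\cite{ALL+B} (Aistleitner--Larcher--Lewko), and the arguments in Sections~\ref{sec:2} and~\ref{sec:3} concern only Theorem~\ref{thm:C}. So there is no in-paper proof to compare against.

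That said, your outline is correct and is essentially the argument carried out in~\cite{ALL+B}: smooth majorant/minorant for the indicator, first moment equal to $2s$, second moment (variance) controlled by the additive energy, Borel--Cantelli along a polynomially growing subsequence, and a sandwiching argument in $s$ and in $N$ to pass to the full limit. The one place where your sketch is a bit optimistic is the passage from the resonance condition $\ell\,(a(j)-a(k))=\ell'\,(a(j')-a(k'))$ to a bound in terms of $E(A_N)$. Summing over $|\ell|,|\ell'|\lesssim N$ does \emph{not} immediately collapse to the count of quadruples with $a(j)-a(k)=a(j')-a(k')$; what actually falls out is a GCD-type sum of the shape
\[
\sum_{d,d'} r(d)\,r(d')\,\frac{\gcd(d,d')}{\max(|d|,|d'|)}\cdot(\text{Fourier weights}),
\]
with $r(d)=A_N(d)$ the representation function of $d$ as a difference, and it is a separate (non-trivial but by now standard) lemma that such GCD sums are dominated by $\sum_d r(d)^2=E(A_N)+O(N^2)$ up to an $N^{o(1)}$ loss. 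In~\cite{ALL+B} this step is made explicit. Once you grant it, the rest of your plan goes through exactly as you describe.
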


Note that the condition of Theorem \ref{th1} is close to optimality, since by the trivial upper bound we always have $E \left(A_{N}\right) \leq N^{3}$; thus an arbitrarily small power savings over the trivial upper bound assures the ``quasi-random'' behavior of the pair correlations of $(\{a(n) \alpha\})_{n \geq 1}$. On the other hand, in \cite{ALL+B} Bourgain showed the following negative result.

\begin{theorem} \label{thm:B}
If $E\left(A_{N}\right)= \Omega \left(N^{3}\right)$, then there exists a subset of $[0,1]$ of positive measure such that for every $\alpha$ from this set the pair correlations of the sequence $\left(\{ a(n) \alpha\}\right)_{n \geq 1}$ are \emph{not} asymptotically Poissonian.
\end{theorem}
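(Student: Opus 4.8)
The plan is to show that $E(A_N) = \Omega(N^3)$ forces, along a subsequence of values of $N$, a concentration phenomenon that makes $R_2$ much larger than $2s$ on a set of $\alpha$ of positive measure. The natural starting point is the second-moment computation that already underlies Theorem~\ref{th1}: writing $R_2(s,\alpha,N)$ as a sum over pairs $j\neq k$ of indicator functions $\ind(\|(a(j)-a(k))\alpha\| \le s/N)$, one expands the indicator in a Fourier series on the torus and integrates over $\alpha \in [0,1]$. The mean of $R_2$ over $\alpha$ is (essentially) $2s$ regardless of $A$, so the information must come from the variance, and the variance is governed precisely by the number of coincidences among the differences $a(j)-a(k)$ — that is, by the additive energy. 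Concretely, $\int_0^1 \big(R_2(s,\alpha,N) - 2s\big)^2 \, d\alpha$ can be bounded below by a constant (depending on $s$) times $E(A_N)/N^2$, up to lower-order terms; when $E(A_N) = \Omega(N^3)$ this lower bound is $\Omega(N)$ along a subsequence, so $R_2$ cannot converge to $2s$ for a.e.\ $\alpha$.

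The step from ``the $L^2$-deviation is large along a subsequence'' to ``there is a fixed positive-measure set of $\alpha$ on which convergence fails'' requires a little more care, because in principle the large deviation could be carried by a set of $\alpha$ that shrinks as $N\to\infty$. First I would fix a value $s_0$ and the subsequence $(N_m)$ along which $E(A_{N_m}) \gg N_m^3$. The key additional input is an $L^2$ or $L^4$ upper bound showing that $R_2(s_0,\alpha,N)$ cannot be \emph{too} large on too large a set: combining the lower bound on the second moment with an upper bound on a higher moment (again expressible through additive-energy-type quantities, now of a slightly higher order but still controlled by the trivial bound $E \le N^3$), a Paley--Zygmund-type argument yields, for each $m$, a set $G_m \subseteq [0,1]$ with $|G_m| \ge \delta$ for some fixed $\delta>0$ on which $R_2(s_0,\alpha,N_m) \ge C$ for a fixed constant $C > 2s_0$. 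Then $\limsup_m |G_m| \ge \delta$, and by taking $G := \limsup_m G_m = \bigcap_M \bigcup_{m \ge M} G_m$ one gets a set of measure at least $\delta$ on which $R_2(s_0,\alpha,N_m) \ge C > 2s_0$ for infinitely many $m$; on this set the pair correlations are not asymptotically Poissonian. (Alternatively one notes that almost-everywhere convergence would, via Fatou/Egorov, force the $L^2$-deviation along the subsequence to tend to $0$, contradicting the lower bound directly; this gives a positive-measure failure set without an explicit $\delta$, which already suffices.)

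The main obstacle I anticipate is the Paley--Zygmund step, specifically obtaining a clean fourth-moment (or otherwise higher-moment) upper bound for $R_2(s_0,\cdot,N)$. Unlike the second moment, which depends only on $E(A_N)$, the fourth moment involves counting solutions of a system of two additive equations in the differences $a(j_1)-a(k_1), \dots$, i.e.\ a more elaborate additive-energy-like quantity, and one must show it is $O\big((E(A_N)/N^2)^2\big)$ or at least $o$ of something large enough for the argument. One can sidestep part of this difficulty by truncating: replace $R_2$ by $\min(R_2, T)$ for a large threshold $T$, for which the $L^2$ norm is trivially bounded by $T^2$, and run Paley--Zygmund on the truncated variable; one only needs that the truncation does not destroy the lower bound on the second moment, which holds provided $T$ is chosen large relative to the constants but fixed. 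The remaining routine work is the explicit Fourier expansion of the indicator of $[-s/N, s/N]$ on $\bbT$, the interchange of summation and integration, and keeping track of the arithmetic of which frequencies survive integration over $\alpha$ — all standard, and essentially the same computation (run in reverse, seeking a lower rather than an upper bound) as in the proof of Theorem~\ref{th1}.
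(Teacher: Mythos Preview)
First, a clarification about the paper itself: Theorem~\ref{thm:B} is not proved in this paper. It is stated and attributed to Bourgain's appendix in~\cite{ALL+B}; the present paper instead proves the sharper Theorem~\ref{thm:C} for the subclass of quasi-arithmetic sequences of degree~$1$, and that proof proceeds by an entirely different route (constructing popular differences, then invoking Cassels' metric Diophantine result, Lemma~\ref{lem_a}). So there is no ``paper's own proof'' of Theorem~\ref{thm:B} to compare against here.

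Turning to your proposal on its merits: the second-moment heuristic is the right instinct, but there are two genuine problems.

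\medskip

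\emph{First, a quantitative slip.} Writing $R_2(s,\alpha,N)=\tfrac1N\sum_{v\neq 0}A_N(v)\,\ind(\|v\alpha\|\le s/N)$, the diagonal $v_1=v_2$ gives
\[
\int_0^1 R_2^2\,d\alpha \;\ge\; \frac{2s}{N^3}\sum_v A_N(v)^2 \;\asymp\; \frac{2s\,E(A_N)}{N^3},
\]
so under $E(A_{N})=\Omega(N^3)$ the variance is $\Omega(1)$, not $\Omega(N)$ as you wrote. (Concretely, for small $s$ one gets $\int(R_2-2s)^2\ge 2s\kappa-4s^2>0$.) This weaker bound is still nontrivial, but it changes the nature of the remaining step.

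\medskip

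\emph{Second, and more seriously, the passage from ``$L^2$-deviation bounded below along a subsequence'' to ``positive-measure failure set'' is a genuine gap, and neither of your two proposed bridges holds.} The parenthetical Fatou/Egorov alternative is simply false: almost-everywhere convergence $R_2\to 2s$ does \emph{not} force $\int(R_2-2s)^2\to 0$ without uniform integrability, and nothing in the setup supplies that. (Fatou gives only $0\le\liminf\int(R_2-2s)^2$, which is vacuous; Egorov leaves an exceptional set that can carry all the $L^2$ mass.) The truncation trick is circular: you need $\int_{\{R_2>T\}}R_2^2$ to be small in order that $\min(R_2,T)$ retain a second-moment lower bound, but that tail control is exactly the missing ingredient. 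A priori nothing rules out $R_2\approx N$ on a set of measure $\asymp 1/N$ (note $R_2\le N-1$ always, and $\int R_2\approx 2s$ forces only $|\{R_2>N/2\}|=O(1/N)$), which would give $\int(R_2-2s)^2\asymp N$ while the ``bad'' sets shrink to nothing.

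What is actually required for a Paley--Zygmund/Cauchy--Schwarz argument is an \emph{upper} bound $\int R_2^2\le C(s,\kappa)$ independent of $N$, and that reduces to controlling the off-diagonal
\[
\frac{1}{N^2}\sum_{v_1\neq \pm v_2} A_N(v_1)A_N(v_2)\,\big|\{\alpha:\|v_1\alpha\|,\|v_2\alpha\|\le s/N\}\big|,
\]
which in turn involves gcd-sums over the (popular) differences. This is not automatic and is where real arithmetic input enters; you have not addressed it. Bourgain's actual argument in~\cite{ALL+B} supplies this missing control; your outline, as it stands, does not.
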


We conjecture that actually even the following much stronger assertion is true.

\begin{conjecture}
\textit{If $E\left(A_{N}\right) = \Omega \left(N^{3}\right)$ there is \textbf{no} $\alpha$ for which the pair correlations of the sequence $\left(\left\{a(n) \alpha\right\}\right)_{n \geq 1}$ are Poissonian.}
\end{conjecture}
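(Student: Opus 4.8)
The plan is to derive the conjecture from the structure theory of sets with large additive energy. Fix the sequence $A=(a(n))_{n\geq 1}$. The hypothesis $E(A_N)=\Omega(N^3)$ means there are a constant $c>0$ and an infinite set $\calN$ of indices $N$ with $E(A_N)\geq cN^3$, and since ``not Poissonian'' only requires $R_2(s,\alpha,N)\not\to 2s$ for one value of $s$, it suffices to work along $\calN$ (and any subsequence thereof) for each fixed $\alpha$. For such $N$, the Balog--Szemer\'edi--Gowers theorem produces a subset $A_N'\subseteq A_N$ with $|A_N'|\gg_c N$ and $|A_N'-A_N'|\ll_c N$, and Freiman's theorem then places $A_N'$ inside a proper generalized arithmetic progression $P_N=\{x_0+\ell_1v_1+\dots+\ell_dv_d:0\leq \ell_i<L_i\}$ of rank $d=O_c(1)$ and size $L_1\cdots L_d\asymp_c N$; see \cite{Tao}. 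This reduces the conjecture to the assertion: \emph{if a positive proportion of $a(1),\dots,a(N)$ lies in a bounded-rank generalized arithmetic progression of length comparable to $N$, then for no $\alpha$ are the pair correlations of $(\{a(n)\alpha\})_{n\geq 1}$ Poissonian.}

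The engine is the rank-one case $a(n)=n$: that $(\{n\alpha\})_{n\geq 1}$ has non-Poissonian pair correlations for \emph{every} $\alpha$ is classical, being the remark made in \cite{R-S} on the strength of the Three Distance Theorem, and I would re-prove it in a quantitative form robust enough to survive both passage to a dense subsequence and an increase of rank. The mechanism is that in $NR_2(s,\alpha,N)=\sum_{0<|m|<N}(N-|m|)\mathbf{1}_{\|m\alpha\|\leq s/N}$ the differences $m$ that contribute are, by the three-gap structure, organised into boundedly many short progressions of multiples of the continued-fraction denominators of $\alpha$; depending on where these denominators fall relative to $N$ one obtains either fewer or more close pairs than the Poissonian value $2sN$ would demand, so $R_2(s,\alpha,N)$ does not converge to $2s$ for small fixed $s$. (Two extremes: for $\alpha$ with bounded partial quotients the count stays bounded and oscillates below $2s$; if $\alpha$ is Liouville-like, a single well-approximating denominator $q$ already yields arbitrarily many differences $m=q,2q,3q,\dots$ with $\|m\alpha\|\leq s/N$, each realised by $\asymp N$ pairs, so $R_2$ is super-Poissonian along a subsequence.)

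The heart of the matter is to run this analysis for an \emph{arbitrary} positive-proportion subset $A_N'$ of a rank-$d$ progression $P_N$, uniformly in $\alpha$. Counting the close pairs internal to $A_N'$ becomes, after a Freiman isomorphism, a lattice-point count over the box $\prod_i[0,L_i)$ intersected with the thin slab $\{(\ell_1,\dots,\ell_d):\|\ell_1\{\alpha v_1\}+\dots+\ell_d\{\alpha v_d\}\|\leq s/N\}$ around a subtorus. I would split on the arithmetic of the $d$-tuple $(\{\alpha v_1\},\dots,\{\alpha v_d\})$. If no integer combination $\sum_in_iv_i$ is unusually well approximated by a rational, then the $\gg N^2$ pairs internal to $A_N'$ spread their share of close pairs thinly across scales, and, as in the rank-one badly-approximable case, this prevents $R_2(s,\alpha,N)\to 2s$ for a suitable small $s$; in the complementary case a coset progression inside $P_N-P_N$ produces, exactly as the multiples of $q$ did above, a super-Poissonian excess of close pairs along a subsequence. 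What powers the generic branch is a higher-dimensional, subset-robust incarnation of the Three Distance Theorem, valid for every $\alpha$.

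I expect two genuine obstacles. First, this multidimensional subset-robust three-gap estimate is easy to believe but hard to establish: the known higher-rank analogues of the Three Distance Theorem are far weaker than the one-dimensional statement, and intersecting a generalized progression with an arbitrary dense subset destroys the clean gap picture, so one would probably have to argue more quantitatively --- for instance via second-moment estimates for the close-pair count at a random small scale. Second, there is a propagation step: having shown that the pairs internal to $A_N'$ deviate from the Poissonian count, one must check that this deviation is not cancelled by the remaining pairs of $A_N$ (those meeting $A_N\setminus A_N'$). In the super-Poissonian branch this is automatic --- extra close pairs only push $R_2$ further above $2s$ --- but in the generic branch the rank-one model delivers a \emph{deficit} of close pairs, not an excess, and ruling out that the uncontrolled part of $A_N$ compensates for it (perhaps by refining the choice of $A_N'$ and of the test scale $s$ so that one can always force an excess instead) is, to my mind, the crux of the conjecture.
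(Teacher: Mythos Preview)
The statement you are attempting is stated in the paper as a \emph{conjecture}; the paper offers no proof of it. What the paper does prove is the partial result Theorem~\ref{thm:C}: if $(a(n))_{n\geq 1}$ is quasi-arithmetic of degree~$1$ (equivalently, along a subsequence a positive proportion of $A_N$ lies in a genuine one-dimensional arithmetic progression of length $\asymp N$), then the set of $\alpha$ for which the pair correlations are not Poissonian has \emph{full Lebesgue measure}. This is weaker than the conjecture in two independent directions: it treats only rank~$1$, and even there it yields ``almost every $\alpha$'' rather than ``every $\alpha$''. The engine is not a three-gap argument for fixed $\alpha$ but metric Diophantine approximation: a variant of a theorem of Cassels guarantees that for almost all $\alpha$ some high-multiplicity difference $\lambda\in A_{N}-A_{N}$ satisfies $\|\lambda\alpha\|=O(1/N)$ along a subsequence of $N$'s, and this forces $\gg N$ pairs to land in a single narrow annulus of scales, so that $R_2(s_2,\alpha,N)-R_2(s_1,\alpha,N)$ is too large for both values to be close to $2s_1$ and $2s_2$.

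Your Balog--Szemer\'edi--Gowers\,/\,Freiman reduction matches exactly the paper's reformulation of the conjecture, and your diagnosis of the two obstacles is honest --- but neither is resolved, so the proposal is a programme rather than a proof. The propagation issue is already decisive for the strategy as written: in your ``generic'' branch you aim for a \emph{deficit} of close pairs coming from $A_N'$, and nothing prevents the $\gg N^2$ pairs meeting $A_N\setminus A_N'$ from supplying a compensating excess at the same scale~$s$. The paper sidesteps this by always producing a concentration of pairs in an annulus (which the complement can only enlarge, never cancel), but the price is the Cassels input, which is intrinsically metric. Upgrading that step from ``almost every $\alpha$'' to ``every $\alpha$'' --- or, in your language, establishing the subset-robust multidimensional three-gap estimate together with a mechanism that guarantees an excess rather than a deficit --- is precisely the open content of the conjecture, and the proposal does not supply it.
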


In this paper we will prove a first partial result which should support this conjecture. However, before stating and discussing our result (which will be done in Section \ref{sec2} below), we want to continue our general discussion of pair correlation problems. As one can see from the previous paragraphs, the metric theory of pair correlation problems on the torus is relatively well-understood. In contrast, there are only very few corresponding results which hold for a \emph{specific} value of $\alpha$. The most interesting case is that of the sequence $(\{n^2 \alpha\})_{n \geq 1}$, where it is assumed that there is a close relation between Diophantine properties of $\alpha$ and the pair correlations distribution. For example, it is conjectured that for a quadratic irrational $\alpha$ this sequence has a pair correlations distribution which is asymptotically Poissonian; however, a proof of this conjecture seems to be far out of reach. A first step towards a proof of the conjecture was made by Heath-Brown \cite{hp}, whose method requires bounds on the number of solutions of certain quadratic congruences; this topic was taken up by Shparlinski \cite{sp1,sp2}, who obtained some improvements, but new ideas seem to be necessary for further steps toward a solution of the conjecture.\\

It should also be noted that the investigation of pair correlation distributions is not restricted to sequences of the torus. For example, consider a positive definite quadratic form $P(x,y) = \alpha x^2 + \beta xy + \gamma y^2$, and its values at the integers $(x,y) = (m,n) \in \mathbb{Z}^2$. These values form a discrete subset of $\mathbb{R}$, and one can study the pair correlations of those numbers contained in a finite window $[0,N]$. See for example \cite{sarnak,van}. Another famous occurrence of the pair correlation statistics of an unbounded sequence in $\mathbb{R}$ is in Montgomery's pair correlation conjecture for the normalized spacings between the imaginary parts of zeros of the Riemann zeta function. The statement of the full conjecture is a bit too long to be reproduced here; we just want to mention that it predicts a distribution of the pair correlations which is very different from ``simple'' random behavior. For more details see Montgomery's paper \cite{mont}. There is a famous story related to Montgomery's conjecture; he met the mathematical physicist Freeman Dyson at tea time at Princeton, where Freeman Dyson identified Montgomery's conjectured distribution as the typical distribution of the spacings between normalized eigenvalues of large random Hermitian matrices -- an observation which has led to the famous (conjectural) connection between the theory of the Riemann zeta function and random matrix theory. The whole story and more details can be found in \cite{bourg}.

\section{New results} \label{sec2}

In the sequel we give a first partial result towards a solution of the conjecture made above. Before stating the result we introduce some notations, and explain the background from additive combinatorics. For $v \in \mathbb{Z}$ let $A_{N} (v)$ denote the cardinality of the set
$$
\left\{\left(x,y\right) \in \left\{1, \ldots, N\right\}^{2}, x \neq y : a(x) - a(y) = v \right\}.
$$
Then
\begin{equation} \label{equ_a}
E \left(A_{N}\right) = \Omega \left(N^{3}\right)
\end{equation}
is equivalent to
\begin{equation} \label{equ_b}
\sum_{v \in \mathbb{Z}} A^{2}_{N} (v) = \Omega \left(N^{3}\right),
\end{equation}
which implies that there is a $\kappa > 0$ and positive integers $N_{1} < N_{2} < N_{3} < \ldots$ such that
\begin{equation} \label{equ_c2}
\sum_{v \in \mathbb{Z}} A^{2}_{N_{i}} (v) \geq \kappa N^{3}_{i}, \qquad i = 1,2,\dots.~\\
\end{equation}

It will turn out that sequences $(a(n))_{n \geq 1}$ satisfying \eqref{equ_a} have a strong linear substructure. From \eqref{equ_c2} we can deduce by the Balog--Szemeredi--Gowers-Theorem (see \cite{Ba+Sz} and \cite{Gow1}) that there exist constants $c, C > 0$ depending only on $\kappa$ such that for all $i=1,2,3,\ldots$ there is a subset $A_{0}^{(i)} \subset \left(a\left(n\right)\right)_{1 \leq n \leq N_{i}}$ such that 
$$
\left|A_{0}^{(i)}\right| \geq c N_{i} \qquad \text{and} \qquad \left|A_{0}^{(i)} + A_{0}^{(i)}\right| \leq C  \left|A_{0}^{(i)}\right| \leq C N_{i}.
$$
The converse is also true: If for all $i$ for a set $A_{0}^{(i)}$ with $A_{0}^{(i)} \subset \left(a\left(n\right)\right)_{1 \leq n \leq N_{i}}$ with $\left| A_{0}^{(i)} \right| \geq c N_{i}$ we have $\left|A_{0}^{(i)} + A_{0}^{(i)}\right| \leq C \left|A_{0}^{(i)}\right|$, then 
$$
\sum_{v \in \mathbb{Z}} A^{2}_{N_{i}} (v) \geq \frac{1}{C} \left|A_{0}^{(i)}\right|^{3} \geq \frac{c^3}{C} N_{i}^{3}
$$
and consequently $\sum_{v \in \mathbb{Z}} A^{2}_{N} (v)= \Omega \left(N^{3}\right)$ (this an elementary fact, see for example Lemma~1~(iii) in \cite{vflev}.)\\

Consider now a subset $A_{0}^{(i)}$ of $\left(a (n)\right)_{1 \leq n \leq N_{i}}$ with 
$$
\left|A_{0}^{(i)}\right| \geq c N_{i} \qquad \text{and} \qquad \left|A_{0}^{(i)} + A_{0}^{(i)} \right| \leq C \left|A_{0}^{(i)}\right|.
$$
By the theorem of Freiman (see \cite{Frei}) there exist constants $d$ and $K$ depending only on $c$ and $C$, i.e. depending only on $\kappa$ in our setting, such that there exists a \emph{$d$-dimensional arithmetic progression} $P_{i}$ of size at most $K N_{i}$ such that $A_{0}^{(i)} \subset P_{i}$. This means that $P_i$ is a set of the form 
\begin{equation}\label{equ_c}
P_{i} := \left\{ \left. b_{i} + \sum^{d}_{j=1} r_{j} k_{j}^{(i)} \right|  0 \leq r_{j} < s_{j}^{(i)} \right\},  
\end{equation}
with $b_{i}, k_{1}^{(i)}, \ldots, k^{(i)}_{d}, s_{1}^{(i)}, \ldots, s_{d}^{(i)} \in \mathbb{Z}$ and such that $s_{1}^{(i)} s_{2}^{(i)} \ldots s_{d}^{(i)} \leq K N_{i}$.\\

In the other direction again it is easy to see that for any set $A_{0}^{(i)}$ of the form \eqref{equ_c} we have
$$
\left|A_{0}^{(i)} + A_{0}^{(i)}\right| \leq 2^{d} K N_{i}.
$$

Based on these observations we make the following definition:

\begin{definition} \label{def_a}
Let $\left(a\left(n\right)\right)_{n \geq 1}$ be a strictly increasing sequence of positive integers. We call this sequence {\em quasi-arithmetic of degree} $\mathbf{d}$, where $d$ is a positive integer, if there exist constants $C,K > 0$ and a strictly increasing sequence $\left(N_{i}\right)_{i \geq 1}$ of positive integers such that for all $i \geq 1$ there is a subset $A^{(i)} \subset \left(a\left(n\right)\right)_{1 \leq n \leq N_{i}}$ with $\left|A^{(i)}\right| \geq C N_{i}$ such that $A^{(i)}$ is contained in a $d$-dimensional arithmetic progression $P^{(i)}$ of size at most $K N_{i}$.~\\
\end{definition}

The considerations above show that a sequence $\left(a\left(x\right)\right)_{x \geq1}$ is quasi-arithmetic of some degree $d$ if and only if it satisfies \eqref{equ_b}.\\

So our conjecture is equivalent to
\begin{conjecture}
\textit{If $\left(a(n)\right)_{n \geq 1}$ is a quasi-arithmetic sequence of integers then there is \textbf{no} $\alpha$ such that the pair correlations of $\left(\left\{a(n) \alpha\right\}\right)_{x \geq 1}$ are asymptotically Poissonian.}
\end{conjecture}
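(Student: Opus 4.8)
The plan is to fix an arbitrary real $\alpha$ and to contradict the Poissonian property along the subsequence $(N_i)$ furnished by Definition~\ref{def_a}: since asymptotically Poissonian pair correlations demand $R_2(s,\alpha,N)\to 2s$ for \emph{every} $s\ge 0$, it suffices to produce a single scale $s_0>0$ at which $R_2(s_0,\alpha,N_i)$ fails to converge to $2s_0$. By Definition~\ref{def_a} at least $CN_i$ of the first $N_i$ integers $a(n)$ lie in the $d$-dimensional progression $P^{(i)}$, so at least $CN_i$ of the points $\{a(n)\alpha\}_{n\le N_i}$ lie in the torus set $\Gamma_i:=\{x\alpha\bmod 1:x\in P^{(i)}\}$, a translate of $G_1^{(i)}+\dots+G_d^{(i)}$ with $G_j^{(i)}:=\{\,r\{k_j^{(i)}\alpha\}\bmod 1:0\le r<s_j^{(i)}\,\}$ and $\prod_j s_j^{(i)}\le KN_i$. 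As in the discussion preceding the conjecture, $A^{(i)}\subseteq P^{(i)}$ forces $|A^{(i)}+A^{(i)}|\le 2^{d}KN_i$, hence $E(A^{(i)})\gg_{d,C,K}N_i^{3}$ by Cauchy--Schwarz; writing $B_i(v):=\#\{(x,y)\in(A^{(i)})^2:a(x)-a(y)=v\}$, a dyadic pigeonholing (using $B_i(v)\le KN_i$, $\sum_v B_i(v)\le K^2N_i^2$, and $\#\{v:B_i(v)>0\}\ll N_i$) produces $\gg_{d,C,K}N_i$ nonzero ``popular'' differences $v\in P^{(i)}-P^{(i)}$, each with $B_i(v)\gg_{d,C,K}N_i$. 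Since $R_2(s_0,\alpha,N_i)\ge\frac1{N_i}\sum_{v\ne 0}B_i(v)\,\mathbf{1}_{\|v\alpha\|\le s_0/N_i}$, the task reduces to understanding how the popular differences of $P^{(i)}$ distribute near $0$ on the torus under $v\mapsto v\alpha$.

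The argument then splits. In the \emph{concentrated} regime, where $\|k_j^{(i)}\alpha\|\le\varepsilon/s_j^{(i)}$ for every $j$ with $\varepsilon=\varepsilon(d,C)$ sufficiently small, the set $\Gamma_i$ sits inside an arc of length $<d\varepsilon$, its $\ge CN_i$ points are crammed into that short arc, and a direct convexity count of pairs lying in sub-arcs of length $s_0/N_i$ yields $\sum_{v\ne 0}B_i(v)\,\mathbf{1}_{\|v\alpha\|\le s_0/N_i}>(2s_0+\delta)N_i$ for a fixed $\delta>0$, so $R_2(s_0,\alpha,N_i)>2s_0+\delta$ and we are done along $(N_i)$. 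In the complementary \emph{spread-out} regime at least one $G_j^{(i)}$ winds around the circle; here one uses that $\Gamma_i$ is a sum of $d$ circular arithmetic progressions, so by the three-distance theorem of {\'S}wierczkowski and S\'os (applied in each summand) the gap set of $\Gamma_i$ has boundedly many lengths in terms of $d$, built from the quantities $\|k_j^{(i)}\alpha\|$; consequently the pair-correlation profile $s\mapsto\frac1{N_i}\#\{x\ne y\in A^{(i)}:\|(a(x)-a(y))\alpha\|\le s/N_i\}$ is a superposition of boundedly many step functions, and the plan is to show that such a superposition, coming from only $\le KN_i$ points, cannot converge to the continuous function $2s$ at all scales --- this is the mechanism by which $(\{n\alpha\})$ fails to be Poissonian for every $\alpha$, including badly approximable $\alpha$ where the failure is $R_2<2s$. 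To make this quantitative and uniform in $\alpha$, I would run a multi-dimensional Dirichlet box argument on the coordinates $(r_1,\dots,r_d)$ of $P^{(i)}$ to extract a nonzero $w_i=\sum_j m_j^{(i)}k_j^{(i)}$ with $|m_j^{(i)}|\le\eta s_j^{(i)}$ and $\|w_i\alpha\|$ of order $1/N_i$, note that $w_i$ and its bounded multiples nearly fix $P^{(i)}$ and generate a controlled family of short differences on which $\|\cdot\,\alpha\|$ is governed by $\sum_j|n_j|\,\|k_j^{(i)}\alpha\|$, and track the resulting discrepancy coordinate by coordinate.

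The two steps I expect to be the main obstacle are, first, transferring all of this from the ambient dense progression $P^{(i)}$ to the possibly sparse subset $A^{(i)}$, whose density inside $P^{(i)}$ may be only $C/K$: the Balog--Szemer\'edi--Gowers/Freiman chain places $A^{(i)}$ inside $P^{(i)}$ but controls neither how $A^{(i)}$ sits there nor how the remaining $(1-C)$-fraction of points of $\{a(n)\}_{n\le N_i}$ behave, so a thin $A^{(i)}$ need not be shifted onto itself by $w_i$ or by any bounded family of short shifts, and the missing points could in principle repair the statistics; circumventing this seems to require a covering argument of Katz--Koester or dependent-random-choice type, showing that a positive proportion of pairs in $A^{(i)}\times A^{(i)}$ already have their difference in the ``short'' set $\{\sum_j n_jk_j^{(i)}:|n_j|\le\eta s_j^{(i)}\}$, together with control on the non-$A^{(i)}$ points when the failure is of the ``too few close pairs'' type. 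Second, for $d\ge 2$ the spread-out regime requires a fine-scale gap analysis of a sum of several circular arithmetic progressions near the origin, uniform over all $\alpha$ and all admissible shapes $(s_1^{(i)},\dots,s_d^{(i)})$, for which no clean multidimensional replacement of the three-distance theorem is available. These are exactly the two points at which the argument currently stalls.
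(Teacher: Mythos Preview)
The statement you are attempting to prove is stated in the paper as a \emph{conjecture}; the paper does not prove it, and hence there is no proof in the paper against which to compare your attempt. What the paper does prove is the much weaker Theorem~\ref{thm:C}: for sequences that are quasi-arithmetic of degree~$1$, the set of $\alpha$ for which the pair correlations fail to be Poissonian has \emph{full Lebesgue measure} --- not that the failure occurs for every~$\alpha$. The paper's method is inherently metric: it extracts from the popular differences of the sets $A^{(i)}$ an integer sequence $(\lambda_n)$ and an approximation function $(\psi_n)$, then invokes a variant of a 1950 theorem of Cassels (Lemma~\ref{lem_a}) together with a density estimate for $\varphi(n)/n$ (Lemma~\ref{lem_b}) to show that for almost all $\alpha$ infinitely many of the $\lambda_n$ satisfy $\|\lambda_n\alpha\|\le\psi_n$; each such good approximation then forces a breakdown of the Poissonian statistics along a further subsequence. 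This approach cannot, by its nature, say anything about a specific~$\alpha$, and the paper makes no claim in that direction even for $d=1$.

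Your proposal pursues a genuinely different and more ambitious route, aiming at every~$\alpha$. The concentrated-regime argument is essentially correct: if all generators satisfy $\|k_j^{(i)}\alpha\|\le\varepsilon/s_j^{(i)}$, then at least $CN_i$ points sit in an arc of length $d\varepsilon$, and a convexity count gives $R_2(s_0,\alpha,N_i)\ge C^2 s_0/(d\varepsilon)-C>2s_0$ for suitable $s_0$ and small~$\varepsilon$. But the heart of the conjecture is the spread-out regime, and here your own honest assessment is accurate: the argument stalls. The two obstacles you name are real and, as far as is known, open. Obstacle~1 is already serious in degree~$1$: the three-distance theorem controls the gap structure of the full progression $\{rk\alpha:0\le r<s\}$, but a positive-density subset of it can have essentially arbitrary gap statistics, and the remaining $(1-C)N_i$ points of $\{a(n)\alpha\}_{n\le N_i}$ --- about which the hypothesis says nothing --- could in principle smooth out any anomaly detected inside~$A^{(i)}$; this is precisely why even the degree-$1$ case of the conjecture (for \emph{all} $\alpha$) is not settled by the paper. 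Obstacle~2 compounds this for $d\ge 2$. In short, your outline isolates the right dichotomy and disposes of the easy half, but the hard half is exactly why the statement remains a conjecture.
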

In the remaining part of this paper we will prove a theorem which slightly improvements the Theorem~\ref{thm:B} of Bourgain for the subclass of sequences $\left(a(n)\right)_{n \geq 1}$ which are quasi-arithmetic of degree 1.
\begin{theorem} \label{thm:C}
If the sequence of integers $\left(a(n)\right)_{n \geq 1}$ is quasi-arithmetic of degree 1, then the set of $\alpha$'s for which the distribution of the pair correlations of $\left(\left\{a(n) \alpha\right\}\right)_{n \geq 1}$ is not asymptotically Poissonian has full measure.
\end{theorem}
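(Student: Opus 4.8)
The plan is to quantify the obstruction to Poissonian pair correlations that comes from the linear substructure, and to upgrade Bourgain's positive‑measure statement to a full‑measure one by exploiting translation invariance. Since the sequence is quasi‑arithmetic of degree $1$, along a subsequence $N_i$ there is a set $A^{(i)}\subset(a(n))_{1\le n\le N_i}$ of size $\ge CN_i$ sitting inside a genuine (one‑dimensional) arithmetic progression $P^{(i)}=\{b_i+r k_i:\ 0\le r<s_i\}$ of length $s_i\le KN_i$. Fix such an $i$. For $\alpha$ in a set $\mathcal G$ of $\alpha$'s one wants to control, write $\beta=k_i\alpha$; the points $\{a(n)\alpha\}$ with $a(n)\in A^{(i)}$ are of the form $\{b_i\alpha+r\beta\}$ with $r$ ranging over at least $CN_i$ distinct integers in $[0,s_i)$. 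The key point: if $\beta$ is close to a rational $p/q$ with small denominator $q$ (specifically $q\ll s_i$), then these $\ge CN_i$ points collapse into $\lesssim q$ tight clusters, each of diameter $\ll s_i\|q\beta\|/q$, and each containing $\gtrsim CN_i/q$ points. Counting ordered pairs within a single cluster at distance $\le s/N_i$ of each other then produces, for suitable fixed $s$, a contribution to $R_2(s,\alpha,N_i)$ of order at least $\sim c^2 N_i/q \gg 2s$, contradicting convergence to $2s$. So: \emph{for every $\alpha$ such that $k_i\alpha$ is well approximable along the scale $s_i$ for infinitely many $i$, the pair correlations are not asymptotically Poissonian.}

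The next step is to turn "well approximable along a scale" into a full‑measure condition. Concretely, I would fix a slowly growing function and show that
\[
\mathcal{E}\ :=\ \Bigl\{\alpha\in[0,1]:\ \|k_i\alpha\|\le N_i^{-1/2}\ \text{for infinitely many } i\Bigr\}
\]
(or a variant with an exponent tuned so the cluster count beats $2s$) has full Lebesgue measure. For a single $i$ the set $\{\alpha:\ \|k_i\alpha\|\le \delta_i\}$ has measure $2\delta_i$ independent of $k_i$; that is too small to invoke Borel–Cantelli directly. This is where one must be more clever: rather than asking $\|k_i\alpha\|$ small, one asks that $k_i\alpha$ lie within $\delta_i$ of \emph{some} rational $p/q$ with $q$ in a dyadic range $[Q,2Q]$ with $Q$ a small power of $s_i$; by a standard count (Dirichlet / continued fractions, or just summing the measures $2\delta_i$ over $\sim Q^2$ admissible fractions $p/q$) this event has measure bounded below by a positive constant, or even tending to $1$, for each $i$. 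Combined with the zero‑one law for the $\limsup$ of these (asymptotically independent enough, or handled via the translation‑invariance / ergodicity of $x\mapsto x+1/q$ type arguments as in Bourgain's original proof), one gets a full‑measure set. The cluster estimate of the previous paragraph then applies with $q\in[Q,2Q]$, $Q$ a fixed small power of $N_i$, so $CN_i/q\to\infty$, killing $2s$ for every fixed $s$ larger than some absolute constant; since non‑Poissonian behavior at one value of $s$ suffices, we are done.

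The main obstacle is the measure‑theoretic upgrade from positive to full measure: making the "there exist many good rational approximations to $k_i\alpha$ at scale $N_i$" event large enough (measure bounded away from $0$, uniformly in the unknown integers $k_i,b_i,N_i$) to feed a Borel–Cantelli–type or zero‑one argument. The geometric/combinatorial cluster estimate is comparatively routine — it is just the Three‑Distance‑Theorem heuristic made quantitative — but it does require care in two places: (i) choosing the approximation scale so that the number of points per cluster, $\sim c^2 N_i/q$, genuinely exceeds $2s$ for a \emph{fixed} $s$ (so one should take $q\le N_i^{\eta}$ for small $\eta$, forcing the clusters to be both numerous in population and small in diameter relative to $s/N_i$), and (ii) ensuring $b_i\alpha$, the common additive shift, does not smear the clusters — but it is a global translation of all the points, so it does not affect pair distances at all. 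A second, smaller technical point: the set $A^{(i)}$ need not be all of $P^{(i)}$, only a positive proportion of $(a(n))_{n\le N_i}$ inside it, so the "$\ge CN_i/q$ points per cluster" bound should be derived from pigeonhole on $A^{(i)}$ itself, which is fine since $|A^{(i)}|\ge CN_i$ and there are only $\le 2Q$ clusters.
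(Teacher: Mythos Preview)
Your proposal correctly isolates the geometric mechanism --- good rational approximation of $k_i\alpha$ forces the points $\{a(n)\alpha\}$ with $a(n)\in A^{(i)}$ to cluster and produces too many close pairs --- but it has a genuine gap at precisely the step that separates this theorem from Bourgain's positive-measure result. You write that the full-measure upgrade follows from ``the zero-one law for the $\limsup$ of these (asymptotically independent enough, or handled via the translation-invariance / ergodicity \dots\ as in Bourgain's original proof)''. But Bourgain's argument gives only positive measure; there is no off-the-shelf zero-one law that applies here, since the events depend on the unknown integers $k_i$ (which could all coincide, or grow arbitrarily) and are not in any usable sense independent or invariant as a family. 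Moreover, your parameter regime does not close: to make the event $\{\exists\, q\in[Q,2Q]:\ \|qk_i\alpha\|\le\delta\}$ have measure bounded away from zero you need $\delta\gtrsim 1/Q$, but then the cluster diameter is of order $s_i\delta/q\gtrsim N_i/Q^2$, and with your choice $Q\le N_i^{\eta}$ this is far larger than $s/N_i$, so the pair count at scale $s/N_i$ is not controlled. The ``routine'' clustering estimate and the measure-theoretic upgrade are in fact coupled, and you have not shown how to decouple them.

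The paper takes a different and more delicate route. Instead of approximating $k_l\alpha$ by arbitrary rationals, it first extracts from each $A_l$ a positive-density family of \emph{popular differences} $T_j^{(l)}=R_j^{(l)}k_l$ (each occurring as $a(q)-a(p)$ with multiplicity $\ge c_2 M_l$, and with $R_j^{(l)}\in[\tfrac{c_2}{2}M_l,KM_l]$). These are concatenated into a single sequence $(\lambda_n)$, and a Cassels-type divergence theorem in metric Diophantine approximation (Lemma~\ref{lem_a}) is invoked to show that $\|\lambda_n\alpha\|\le\psi_n:=1/M_l$ holds infinitely often for almost every $\alpha$. Verifying Cassels' condition~(*) is the real work: by spacing successive blocks so that $M_l$ dwarfs all earlier $T_j^{(p)}$, the ``new fraction'' count reduces to $\mu_n\ge\tfrac12\varphi(R_j^{(l)})k_l$, and then a separate lemma (Lemma~\ref{lem_b}, resting on Weingartner's distribution result for $n/\varphi(n)$) shows that $\sum_j\varphi(R_j^{(l)})/R_j^{(l)}$ is bounded below because the $R_j^{(l)}$ form a positive-density subset of $\{1,\dots,KM_l\}$. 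Once $\|\lambda_{n_i}\alpha\|\le 1/M_{l(n_i)}$ infinitely often is established, no clustering is needed: since $\lambda_{n_i}$ itself is a difference of multiplicity $\ge c_2 M_{l(n_i)}$, there are that many pairs $(p,q)$ with $\|a(q)\alpha-a(p)\alpha\|$ exactly equal to $\|\lambda_{n_i}\alpha\|$, and a short case split on the limit point of $M_{l(n_i)}\|\lambda_{n_i}\alpha\|$ finishes the argument. The use of popular differences, Cassels' criterion, and the totient-density input are the ingredients your sketch is missing.
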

\textbf{Remark.}
\textit{The class of quasi-arithmetic sequences $\left(a\left(n\right)\right)_{n\geq 1}$ of degree 1 contains all strictly increasing sequences with positive upper density, i.e.
$$
\underset{N \rightarrow \infty}{\lim \sup}~\frac{1}{N} \underset{m \in \left\{a \left(n\right) \left| \right. n \geq 1 \right\}}{\sum^{N}_{n=1}} 1 > 0.
$$
In particular this class contains all strictly increasing sequences which are bounded above by a linear function.}\\

We will first state two auxiliary results in Section~\ref{sec:2}, and then give the proof of Theorem~\ref{thm:C} in Section~\ref{sec:3}.

\section{Auxiliary results} \label{sec:2}

\begin{lemma} \label{lem_a}
Let $\left(\lambda_{n}\right)_{n \geq 1}$ be a strictly increasing sequence of positive integers. Let $\mu_{n}$ be the number of fractions of the form $j \lambda_{n}^{-1} \left(0 < j< \lambda_{n}\right)$ which are not of the form $k \lambda_{q}^{-1}$ with some $q < n$ and $ k < \lambda_q$. Furthermore, let $\left(\psi_{n}\right)_{n \geq 1}$ be a non-increasing sequence of positive reals such that $\sum^{\infty}_{n=1} \psi_{n} = \infty$ and with the following property~(*):\\

There exists a sequence $\left(\tau_{n}\right)_{n \geq 1}$ of positive reals tending monotonically to zero, but so slowly that $\sum^{\infty}_{n=1} \psi_{n} \tau_{n}$ still diverges, and such that there exist a constant $c > 0$ and infinitely many positive integers $N$ with 
$$
\sum^{N}_{n=1} \mu_{n} \lambda_{n}^{-1} \psi_{n} \tau_{n} > c \sum^{N}_{n=1} \psi_{n} \tau_{n}.
$$
Then -- if (*) holds -- for almost all $\theta \in \mathbb{R}$ there exist infinitely many positive integers $n$, and integers $m$, such that
$$
0 \leq \lambda_{n} \theta - m < \psi_{n}.
$$
\end{lemma}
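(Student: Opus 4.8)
The plan is to recognize this as a variant of the Duffin–Schaeffer-type machinery for metric Diophantine approximation, specialized to the situation where the approximating fractions have denominators exactly $\lambda_n$ but where we only count the ``new'' fractions $j/\lambda_n$ not already appearing at an earlier stage. First I would set, for each $n$, the set
$$
E_n := \bigcup_{\substack{0 < j < \lambda_n \\ j/\lambda_n \text{ is ``new''}}} \left[ \frac{j}{\lambda_n}, \frac{j}{\lambda_n} + \frac{\psi_n}{\lambda_n} \right) \pmod 1,
$$
so that $\theta$ lies in infinitely many $E_n$ precisely when the conclusion $0 \le \lambda_n \theta - m < \psi_n$ holds for infinitely many $n$ (with $m = j$, up to the integer part of $\lambda_n\theta$). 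The measure of $E_n$ is $\mu_n \psi_n / \lambda_n$. The hypothesis (*) is engineered so that, along the infinitely many $N$ provided, $\sum_{n=1}^N |E_n| \ge \sum_{n=1}^N \mu_n \lambda_n^{-1}\psi_n\tau_n > c\sum_{n=1}^N \psi_n\tau_n \to \infty$, so in particular $\sum_n |E_n| = \infty$. The goal is to apply the divergence Borel–Cantelli lemma, for which one needs a quasi-independence (second-moment) estimate.

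The heart of the argument is therefore to control the overlaps $|E_m \cap E_n|$ for $m < n$. Here I would exploit the crucial structural fact that the centers of $E_m$ are rationals with denominator $\lambda_m$, the centers of $E_n$ are rationals with denominator $\lambda_n$, and — because we have discarded from stage $n$ every fraction that already occurs at an earlier stage $q < n$, in particular at stage $m$ — the center of any interval in $E_n$ is \emph{not} equal to any center of an interval in $E_m$. Two distinct rationals $j/\lambda_m$ and $j'/\lambda_n$ differ by at least $1/(\lambda_m\lambda_n)$ in general, but more to the point, for a fixed interval of $E_m$ the number of ``new'' fractions $j'/\lambda_n$ falling within distance $\psi_m/\lambda_m$ of it is $O(\psi_m \lambda_n/\lambda_m + 1)$, and each contributes an overlap of length at most $\psi_n/\lambda_n$. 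Summing over the $\mu_m$ intervals of $E_m$ and being careful to separate the ``many fractions'' regime from the ``boundary'' regime where the $+1$ term dominates, I expect to obtain
$$
|E_m \cap E_n| \ll |E_m|\,|E_n| + \frac{\mu_m \psi_n}{\lambda_n},
$$
or an estimate of comparable shape; the monotonicity $\psi_n \le \psi_m$ and the increase of $\lambda_n$ are what keep the error term summable in the relevant double sum. Once $\sum_{m,n \le N} |E_m \cap E_n| \ll \big(\sum_{n \le N}|E_n|\big)^2$ holds along the special sequence of $N$'s, the Erdős–Gál–Koksma form of the second Borel–Cantelli lemma gives that $\limsup E_n$ has positive measure, and then a zero–one law (the set of $\theta$ with infinitely many solutions is invariant under $\theta \mapsto \theta + 1/q$ for suitable $q$, or one invokes a standard density argument) upgrades ``positive measure'' to ``full measure''.

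The main obstacle I anticipate is precisely the overlap estimate, and specifically making it robust enough to survive the weighting by the slowly-varying $\tau_n$. The role of $\tau_n$ in (*) is exactly to buy a little room: it lets one restrict attention to a subset of indices $n$ on which $\mu_n/\lambda_n$ is not too small (comparable to its ``average'' against the weight $\psi_n$) without destroying divergence of the series, so that the good-overlap indices still carry divergent total measure. I would first prove the lemma under the cleaner assumption that $\sum_n \mu_n \lambda_n^{-1}\psi_n = \infty$ with a matching lower bound $\sum_{n\le N}\mu_n\lambda_n^{-1}\psi_n \gg \sum_{n \le N}\psi_n$ along a sequence of $N$, establish the second-moment bound there, and then recover the general statement by applying this special case to the sequence $(\psi_n \tau_n)_{n \ge 1}$ in place of $(\psi_n)$ — checking that passing from $\psi_n$ to $\psi_n\tau_n$ only shrinks the intervals $E_n$, hence can only help the overlap bound, while (*) guarantees the divergence hypothesis for the modified sequence. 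A secondary technical point to watch is the wrap-around on the torus and the case distinction when $\psi_n$ is not small, which is harmless since one may assume $\psi_n \le 1$ throughout (larger $\psi_n$ only makes the conclusion easier).
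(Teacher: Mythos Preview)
Your plan is sound and is, in fact, a faithful reconstruction of the argument in Cassels' 1950 paper that the authors invoke. The paper does not reprove the lemma at all: it simply observes that the statement is the divergence half of Theorem~IV in Cassels, proved there under the stronger hypothesis $\liminf_{N\to\infty} N^{-1}\sum_{n\le N}\mu_n\lambda_n^{-1}>0$, and that replacing this by condition~(*) affects only Cassels' auxiliary Lemma~3, whose proof goes through verbatim with the weighted series $\sum \psi_n\tau_n$ in place of $\sum \psi_n$.

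What you sketch is precisely the machinery behind Cassels' Theorem~IV: define the sets $E_n$ of ``new'' approximation intervals, note $|E_n|=\mu_n\psi_n/\lambda_n$, bound the pairwise overlaps $|E_m\cap E_n|$ using the spacing of rationals with denominator $\lambda_n$, and feed this into the second-moment Borel--Cantelli lemma followed by a zero--one argument. Your identification of the role of $\tau_n$ --- that one should run the whole argument with $\psi_n\tau_n$ in place of $\psi_n$, so that (*) supplies the comparison $\sum_{n\le N}|E_n'|\gg\sum_{n\le N}\psi_n\tau_n$ needed to dominate the main term in the overlap sum --- is exactly right, and the observation that $\psi_n\tau_n\le\psi_n$ eventually makes the conclusion for the smaller intervals imply the stated one. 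So your route and the paper's are the same mathematics; the paper just outsources the details to Cassels rather than writing them out.
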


\begin{proof}
This lemma is essentially the divergence part of Theorem IV in \cite{Cassels}. It is shown there that the assertion of our Lemma~\ref{lem_a} is true under the slightly stronger condition that $\left(\psi_{n}\right)_{n \geq 1}$ -- as in our Lemma -- is a non-increasing sequence of positive reals with $\sum^{\infty}_{n=1} \psi_{n} = \infty$, and that $\left(\lambda_{n}\right)_{n \geq 1}$ satisfies 
$$
\underset{N \rightarrow \infty}{\lim \inf}~\frac{1}{N} \sum^{N}_{n=1} \mu_{n} \lambda_{n}^{-1} > 0.
$$
If we follow the proof of Theorem IV in \cite{Cassels} line by line we see that our slightly weaker condition (*) also is sufficient to obtain the desired result. In fact replacing Cassel's condition by our condition (*) is relevant only in the proof of Lemma~3 in \cite{Cassels}, which is an auxiliary result for the proof of Theorem IV in \cite{Cassels}.
\end{proof}

\begin{lemma} \label{lem_b}
For all $\delta>0$ there is a positive constant $c{(\delta)} > 0$, such that for every infinite subset $A$ of $\mathbb{N}$ with 
$$
\underline{d}(A) := \underset{N \rightarrow \infty}{\lim \inf} ~\frac{1}{N} \# \left\{n \leq N \left| \right. n \in A \right\} > \delta
$$
we have
$$
\underset{N \rightarrow \infty}{\lim \inf}~\frac{1}{N} \underset{n \in A}{\sum_{n \leq N}} \frac{\varphi (n)}{n} \geq c{\left(\delta\right)}.
$$
Here $\varphi$ denotes the Euler totient function.
\end{lemma}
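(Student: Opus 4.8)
The plan is to show that $\varphi(n)/n$ is bounded away from $0$ for \emph{most} positive integers $n$, in a quantitative and $A$-independent way, and then to intersect this large set of ``good'' integers with $A$. The only arithmetic input needed is a uniform upper bound for the average of $n/\varphi(n)$.

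First I would establish the elementary estimate $\sum_{n \le N} \frac{n}{\varphi(n)} \le C_{0} N$ for an absolute constant $C_{0}$ and all $N \ge 1$. This follows from the identity $\frac{n}{\varphi(n)} = \sum_{d \mid n} \frac{\mu^{2}(d)}{\varphi(d)}$ (both sides are multiplicative and agree at prime powers, where each side equals $p/(p-1)$), which gives
\[
\sum_{n \le N} \frac{n}{\varphi(n)} \;=\; \sum_{d \le N} \frac{\mu^{2}(d)}{\varphi(d)}\, \floor{N/d} \;\le\; N \sum_{d=1}^{\infty} \frac{\mu^{2}(d)}{d\,\varphi(d)} \;=\; N \prod_{p} \left( 1 + \frac{1}{p(p-1)} \right) \;=:\; C_{0} N ,
\]
the product over primes being convergent since $\sum_{p} \frac{1}{p(p-1)} < \infty$. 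By Markov's inequality, for every $\eta > 0$ the set of $n$ with $\varphi(n)/n < \eta$, i.e.\ with $n/\varphi(n) > 1/\eta$, satisfies $\#\{ n \le N : \varphi(n)/n < \eta \} \le C_{0}\,\eta\, N$ for all $N$.

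Now, given $\delta > 0$, I choose $\eta = \eta(\delta) > 0$ so small that $C_{0}\,\eta < \delta/2$; note $\eta$ depends on $\delta$ alone. For all sufficiently large $N$ the hypothesis $\underline{d}(A) > \delta$ gives $\#\{ n \le N : n \in A \} > \delta N$, whence
\[
\#\{ n \le N : n \in A,\ \varphi(n)/n \ge \eta \} \;\ge\; \delta N - C_{0}\,\eta\, N \;>\; \frac{\delta}{2}\, N .
\]
Restricting the sum $\sum_{n \le N,\, n \in A} \varphi(n)/n$ to these $n$ and bounding each term from below by $\eta$ yields $\frac{1}{N} \sum_{n \le N,\, n \in A} \frac{\varphi(n)}{n} \ge \eta \cdot \frac{\delta}{2}$ for all large $N$, so the $\liminf$ is at least $c(\delta) := \eta(\delta)\,\delta/2 > 0$, which is the claim.

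There is no serious obstacle here; the only point requiring a little care is that the constant $C_{0}$ in $\sum_{n\le N} n/\varphi(n) \ll N$ is genuinely independent of $N$, and that the subsequent choices of $\eta$ and of the threshold for ``$N$ sufficiently large'' depend on $\delta$ alone — this is what makes the resulting $c(\delta)$ uniform over all admissible sets $A$, as the statement demands. (One could replace the identity-plus-Markov argument by a Mertens/sieve estimate showing directly that $\{n : \varphi(n)/n < \eta\}$ has upper density $\to 0$ as $\eta \to 0$, but the route above is shorter and completely self-contained.)
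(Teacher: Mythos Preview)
Your proof is correct, and it follows the same high-level strategy as the paper: show that the ``bad'' integers $n$ with $\varphi(n)/n$ small form a set of small upper density (uniformly in $N$), then intersect the complementary ``good'' set with $A$ and bound each surviving summand from below. The difference is in how the density of the bad set is controlled.

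The paper appeals to a result of Weingartner, which asserts that the limiting distribution
\[
B(t) \;=\; \lim_{N \to \infty} \frac{1}{N}\,\bigl|\{\, n \le N : n/\varphi(n) \ge t \,\}\bigr|
\]
exists and satisfies the asymptotic $B(t) = \exp\bigl(-e^{t e^{-\gamma}}(1+O(t^{-2}))\bigr)$ as $t \to \infty$. From this one reads off that $B(t_0) \le \delta/4$ for a suitable $t_0 = t_0(\delta)$, and then proceeds exactly as you do.

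You replace this external input by the elementary first-moment bound $\sum_{n \le N} n/\varphi(n) \le C_0 N$ (via the convolution identity $n/\varphi(n) = \sum_{d\mid n} \mu^2(d)/\varphi(d)$ and the convergent Euler product) together with Markov's inequality. This is shorter and entirely self-contained; the price is a weaker tail bound (density $\le C_0 \eta$ rather than doubly-exponential decay), and hence a worse explicit value of $c(\delta)$, of order $\delta^2$ rather than roughly $\delta/\log\log(1/\delta)$. Since the application in the paper needs only \emph{some} positive $c(\delta)$, your argument is perfectly adequate and arguably preferable for this purpose.
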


\begin{proof}
Let 
$$
B (t) := \underset{N \rightarrow \infty}{\lim}~\frac{1}{N} \left|\left\{n \leq N \left| \frac{n}{\varphi (n)} \geq t\right\} \right.\right|.
$$
Then by the main theorem in \cite{Wein} the limit $B(t)$ exists and satisfies
$$
B(t) = \exp \left(-e^{t e^{-\gamma}} \left(1+ O \left(t^{-2}\right)\right) \right)
$$
for $t$ to infinity and with $\gamma$ denoting Euler's constant. Here, and in the sequel, we write $\exp(x)$ for $e^x$. (\emph{Note}: An earlier version of the paper contained an error in the formula for the function $B(t)$ above. The same error also appears in the printed version of the paper. This error also requires corrections in the remaining part of the proof of the lemma, which appears in corrected form below.)\\

So there is a constant $L > 0$ such that
$$
B(t) \leq \exp \left(-e^{t e^{-\gamma}} \left(1 - \frac{L}{t^{2}}\right) \right)
$$
for all $t \geq 1$. Hence
$$
B(t) \leq \exp \left(-\frac{e^{t e^{- \gamma}}}{2} \right)
$$
for all $t \geq \max \left(1, \sqrt{2 L}\right)$. Now assume that $\delta > 0$ is so small that 
$$t_{0} :=  e^{\gamma}  \log \left(-2\log \frac{\delta}{4} \right) > \max \left(1, \sqrt{2 L}\right).
$$
Note that it suffices to prove the lemma for such $\delta$ . We have
$$
B \left(t_{0}\right) = \underset{N \rightarrow \infty}{\lim}~\frac{1}{N} \left|\left\{n \leq N \left| \frac{n}{\varphi (n)} \geq t_0 \right\} \right. \right|
$$
and
$$
B\left(t_{0}\right) \le \exp \left(-\frac{e^{t_0 e^{- \gamma}}}{2} \right) = \frac{\delta}{4}.
$$
Hence there exists $N_{0}$ such that for all $N \geq N_{0}$
$$
\frac{1}{N} \left|\left\{n \leq N \left| \frac{n}{\varphi (n)} \geq t_{0} \right\} \right. \right| \leq \frac{\delta}{3}.
$$
Therefore, since $\underline{d}(A) > \delta$, for all sufficiently large $N$ we have
$$
\frac{1}{N} \left|\left\{n \leq N, n \in A \left| \frac{n}{\varphi (n)} \leq t_{0}\right\} \right. \right| \geq \frac{\delta}{3}
$$
and consequently also
$$
\frac{1}{N} \underset{n \in A}{\sum_{n \leq N}} \frac{\varphi (n)}{n} \geq \frac{\delta}{3} \frac{1}{t_{0}} =: c (\delta) >0.
$$
\end{proof}

\section{Proof of Theorem~\ref{thm:C}} \label{sec:3}

Let $\left(a(n)\right)_{n \geq 1}$ be quasi-arithmetic of degree one and let $C, K > 0, \left(N_{i}\right)_{i \geq 1}, \left(A^{(i)}\right)_{i \geq 1}$ and $\left(P^{(i)}\right)_{i \geq 1}$ be as described in Definition~\ref{def_a}. In the sequel we will define inductively a certain strictly increasing subsequence $\left(M_{l}\right)_{l \geq 1}$ of $\left(N_{i}\right)_{i\geq1}$.\\

Set $M_{1} := N_{1}$ and assume that $M_{1}, M_{2}, \ldots, M_{l-1}$ already are defined. If $M_{l} = N_{i_{l}}$ (where $i_{l}$ still has to be defined) to simplify notations we write $A_{l} := A^{\left(i_{l}\right)},~ P_{l} := P^{\left(i_{l}\right)}.$\\

We set
$$
P_{l} := \left\{a_{l} + r \kappa_{l} \left| \right. 0 \leq r < K M_{l}\right\}
$$
and
$$
A_{l} := \left\{ \left. a_{l} + r_{j}^{(l)} \kappa_{l} \right|  j=1,2,\ldots, s_{l}  \right\}
$$
with certain fixed $r_{j}^{(l)}$ with $1 \leq r_{1}^{(l)} < r_{2}^{(l)} < \ldots < r_{s_{l}}^{(l)} < K M_{l}$
and $s_{l} \geq C M_{l}$. Of course we have $s_{l} < K M_{l}$.\\

We consider
$$
V_{l} := \left\{ \left. \left(r_{i}^{(l)} - r_{j}^{(l)} \right) \kappa_{l} \right| 1 \leq j < i \leq s_{l} \right\},
$$
the set of positive differences of $A_{l}$. Here $V_{l}$ is the set itself, whereas by $\widetilde{V}_{l}$ we will denote the same set of positive differences but counted with multiplicity (so strictly speaking $\widetilde{V}_{l}$ is a multi-set rather than a set). Hence $\left|V_{l}\right|< K M_{l}$, whereas 
$$
\left|\widetilde{V}_{l}\right|= \frac{s_{l} \left(s_{l}-1\right)}{2} \geq c_{1} M_{l}^{2}.
$$
Here and in the sequel we write $c_{i}$ for positive constants depending only on $C$ and $K$. We note that a value $u \in V_{l}$ has multiplicity at most $s_{l}$.\\

Let $x$ be the number of elements in $V_{l}$ with multiplicity at least $c_{2} M_{l}$ where $c_{2} :=~\min \left(K, \frac{c_{1}}{2 K}\right).$ Assume that $x < c_{2} M_{l}$. Then

\begin{eqnarray*}
c_{1}  M_{l}^{2} & \leq & \left|\widetilde{V}_{l}\right| \leq x  s_{l} + \left(\left|V_{l}\right| -x \right)  c_{2}  M_{l}\\
& \leq & x  K  M_{l} + \left(K  M_{l} -x\right)  c_{2} M_{l}\\
& = & M_{l} \left(x \left(K - c_{2}\right) + K  c_{2} M_{l}\right)\\
& < & M_{l}^{2} \left(c_{2}  \left(K - c_{2}\right) + K c_{2}\right)\\
& < & M_{l}^{2} c_{1},
\end{eqnarray*}
a contradiction.\\

So there are at least $c_{2}  M_{l}$ values $u \in V_{l}$ with multiplicity at least $c_{2}  M_{l}$. We take the $\frac{c_{2}}{2} M_{l}$ largest of these values and denote them by $T_{1}^{(l)} < T_{2}^{(l)} < \ldots < T_{w_{l}}^{(l)}$ with $w_{l} \geq \frac{c_{2}}{2} M_{l}$ and $T_{j}^{(l)} := R_{j}^{(l)}  \kappa_{l}$. Note that
\begin{equation} \label{equ_e}
\frac{c_{2}}{2} M_{l} \leq R_{1}^{(l)} < \ldots < R_{w_{l}}^{(l)} < K  M_{l}.
\end{equation}

Remember that we still have to choose $i_{l} > i_{l-1}$ and to define $M_{l}$ as $N_{i_{l}}$. We choose now $i_{l}$ so large that 

\begin{equation} \label{equ_f}
M_{l} > \left(\sum^{l-1}_{p=1} \sum^{w_{p}}_{q=1} T_{q}^{(p)}\right)^{2}.
\end{equation}

So altogether we have constructed a strictly increasing sequence $\lambda_{1} < \lambda_{2} < \lambda_{3} < \ldots$ of integers given by $T_{1}^{(1)} < \ldots < T^{(1)}_{w_{1}} < T_{1}^{(2)} < \ldots < T_{w_{2}}^{(2)} < T_{1}^{(3)} < \ldots$.\\

Furthermore we define a decreasing sequence $\left(\psi_{n}\right)_{n \geq 1}$ of positive reals in the following way. If $\lambda_{n}$ is such that $T_{1}^{(l)} \leq \lambda_{n} \leq T_{w_{l}}^{(l)}$, then $\psi_{n} := \frac{1}{M_{l}}$.\\

Obviously we have
$$
\underset{n \rightarrow \infty}{\lim} \psi_{n} = 0
$$ 
and
$$
\sum^{\infty}_{n=1} \psi_{n} \geq \sum^{\infty}_{l=1} w_{l}  \frac{1}{M_{l}} \geq \sum^{\infty}_{l=1} \frac{c_{2}}{2} M_{l}  \frac{1}{M_{l}} = \infty.
$$
We will show below that $\left(\lambda_{n}\right)$ and $\left(\psi_{n}\right)$ satisfy the condition (*) of Lemma~\ref{lem_a}.\\

We choose $N:= w_{1} + \ldots + w_{l}$ and first estimate $\sum_{n \leq N} \mu_{n} \lambda_{n}^{-1} \psi_{n}$ from below (for the definition of $\mu_{n}$ see Lemma~\ref{lem_a}). We have
$$
\sum_{n \leq N} \mu_{n} \lambda_{n}^{-1} \psi_{n} \geq \sum^{N}_{n=N-w_{l}+1} \mu_{n} \lambda_{n}^{-1} \psi_{n}.
$$
In the following we estimate $\mu_{n}$ from below for $n$ with $N-w_{l} +1 \leq n \leq N,$ i.e., $\lambda_{n} = T_{i}^{(l)} = R_{i}^{(l)}  \kappa_{l}$ for some $i$ with $1 \leq i \leq w_{l}$.\\

Consider first $\lambda_{q}$ with $q \leq w_{1} + \ldots + w_{l-1}$. Then the number of $j$ with $0 \leq j < \lambda_{n}$ such that $j  \lambda_{n}^{-1}$ is of the form $k  \lambda_{q}^{-1}$ with $0 \leq k < \lambda_{q}$ trivially is at most $\lambda_{q}$.\\

Now consider $\lambda_{q}$ with $q > w_{1} + \ldots + w_{l-1}$ and $\lambda_{q} < \lambda_{n}$, i.e.,
$$
\lambda_{q} = T_{h}^{(l)} = R_{h}^{(l)}  \kappa_{l}
$$
for some $h$ with $1 \leq h < i$. Then the number of $j$ with $0 \leq j < \lambda_{n}$ such that $j  \lambda_{n}^{-1}$ is \textbf{not} of the form $k  \lambda_{q}^{-1}$ with $0 \leq k < \lambda_{q}$, i.e., such that
\begin{eqnarray*}
\frac{j}{\lambda_{n}} = \frac{k}{\lambda_{q}} & \Leftrightarrow & \frac{j}{R_{i}^{(l)}  \kappa_{l}} = \frac{k}{R_{h}^{(l)}  \kappa_{l}}\\
& \Leftrightarrow & \frac{j}{R_{i}^{(l)}} = \frac{k}{R_{h}^{(l)}}
\end{eqnarray*}
does \textbf{not} hold, is at least $ \varphi \left(R_{i}^{(l)}\right)  \kappa_{l}$. Hence by \eqref{equ_e} and by \eqref{equ_f}
\begin{eqnarray*}
\mu_{n} & \geq & \varphi \left(R_{i}^{(l)}\right)  \kappa_{l} - \sum_{q=1}^{w_{1} + \ldots + w_{l-1}} \lambda_{q}\\
& \geq & \varphi \left(R_{i}^{(l)}\right)  \kappa_{l} - \sqrt{M_{l}} \geq \frac{1}{2} \varphi \left(R_{i}^{(l)}\right)  \kappa_{l}
\end{eqnarray*}
for all $l$ large enough, say $l \geq l_{0}$ (note that $R_i^{(l)} \ge \frac{c_2}{2} M_l$).\\

Therefore for $ l \ge l_0$
\begin{eqnarray} \label{equ_g}
\sum_{n \leq N} \mu_{n} \lambda_{n}^{-1} \psi_{n} & \geq & \sum^{N}_{n=N-w_{l}+1} \mu_{n} \lambda_{n}^{-1} \psi_{n}\\ \nonumber
& \geq & \frac{1}{M_{l}} \sum_{i=1}^{w_{l}} \frac{1}{2} \varphi \left(R_{i}^{(l)}\right)  \kappa_{l}  \frac{1}{R_{i}^{(l)}  \kappa_{l}}\\ \nonumber
& = & \frac{1}{2 M_{l}} \sum_{i=1}^{w_{l}} \frac{\varphi \left(R_{i}^{(l)}\right)}{R_{i}^{(l)}}.
\end{eqnarray}
Later on we will use the same chain of inequalities starting from the second expression in \eqref{equ_g}.\\

We recall that $w_{l} \geq \frac{c_{2}}{2} M_{l}$, and $R_{i}^{(l)} \leq K  M_{l}$ for all $i=1,\ldots, w_{l}$. Hence $R_{1}^{(l)}, \ldots, R_{w_{l}}^{(l)}$ form a subset of $\left\{1,2,\ldots, K  M_{l}\right\}$ of density at least $c_{3} := \frac{c_{2}}{2 K}$. Hence by Lemma~\ref{lem_b} we have for $l$ large enough and with $c$ from Lemma 2 that
\begin{equation} \label{equ_h}
\sum_{n \leq N} \mu_{n} \lambda_{n}^{-1} \psi_{n} \geq \frac{K}{2}  c \left(\frac{c_2}{2 K}\right) =: c_{4} > 0.
\end{equation}
This holds for all $N = w_{1} + \ldots + w_{l}$ and all $l \geq l_{0}$.\\

Finally we have to choose the function $\left(\tau_{n}\right)_{n \geq 1}$ from condition (*) in Lemma \ref{lem_a} in a suitable way. If $\lambda_{n}$ is such that $T_{1}^{(l)} \leq \lambda_{n} \leq T_{w_{l}}^{(l)}$, i.e., if $\psi_{n} = \frac{1}{M_{l}}$, then we set $\tau_{n} := \frac{1}{l}$. Then 
$$
\sum^{\infty}_{n=1} \psi_{n} \tau_{n} \geq \sum^{\infty}_{l=1} w_{l}  \frac{1}{M_{l}}  \frac{1}{l} \geq \sum^{\infty}_{l=1} \frac{c_{2}}{2} M_{l}  \frac{1}{M_{l}}  \frac{1}{l} = \infty.
$$
Finally, on the one hand for all $N=w_{1} + \ldots + w_{l}$ we have by \eqref{equ_g} and \eqref{equ_h} that
$$
\sum_{n \leq N} \mu_{n} \lambda_{n}^{-1} \psi_{n} \tau_{n} \geq \sum^{l}_{l' = l_{0}} c_{4}  \frac{1}{l'} \geq c_{5}  \log l
$$
for all $l \geq l_{0}$.\\

On the other hand we have
$$
\sum_{n \leq N} \psi_{n} \tau_{n} \leq \sum^{l}_{l'=1} w_{i_{l}}  \frac{1}{M_{l}}  \frac{1}{l} = \sum^{l}_{l'=1} K  \frac{1}{l} \leq c_{6}  \log l.
$$
Consequently
$$
\sum_{n \leq N} \mu_{n} \lambda_{n}^{-1} \psi_{n} \tau_{n} \geq c_{5}  \log l \geq \frac{c_{5}}{c_{6}}  \sum_{n \leq N} \psi_{n} \tau_{n}
$$
and the conditions of Lemma~\ref{lem_a} are satisfied for $\left(\lambda_{n}\right)_{n \geq 1}$ and $\left(\psi_{n}\right)_{n \geq 1}$. We conclude from Lemma 1 that for almost all $\alpha$ there exist infinitely many $n$ such that $\left\|\lambda_{n} \alpha \right\| \leq \psi_{n}$ holds. Let such an $\alpha$ be given, and let $n_{1} < n_{2} < n_{3} < \ldots$ be such that $\left\|\lambda_{n_{i}} \alpha \right\|\leq \psi_{n_{i}}$ for all $i=1,2,3,\ldots$. For any $n_{i}$ let $l \left(n_{i}\right)$ be defined such that $w_{1} + w_{2} + \ldots + w_{l \left(n_{i}\right)-1} < n_{i} \leq w_{1} + w_{2} + \ldots + w_{l \left(n_{i}\right)}$, then $\psi_{n_{i}} = \frac{1}{M_{l \left(n_{i}\right)}}$, hence
$$
0 \leq \left\|\lambda_{n_{i}} \alpha \right\|  M_{l \left(n_{i}\right)} < 1
$$
for all $i$.\\

Let $\rho$ with $0 \leq \rho \leq 1$ be a limit point of $\left(\left\|\lambda_{n_{i}} \alpha \right\|  M_{l \left(n_{i}\right)}\right)_{i = 1,2, \ldots}.$ We distinguish now between two cases.\\

First case: $\rho = 0$.\\

Then there exists a subsequence $m_{1} < m_{2} < m_{3} < \ldots$ of $n_{1} < n_{2} < n_{3} < \ldots$ such that
$$
0 \leq \left\|\lambda_{m_{i}} \alpha \right\| < \frac{1}{M_{l \left(m_{i}\right)}}  \frac{c_{2}}{4 K^{2}}
$$
for all $i$. $\lambda_{m_{i}}$ is an element of $V_{l \left(m_{i}\right)}$ with multiplicity at least $c_{2}  M_{l \left(m_{i}\right)}$. Hence there exist at least $c_{2}  M_{l \left(m_{i}\right)}$ pairs $\left(p, q\right)$ with
$$
1 \leq p < q \leq s_{l \left(m_{i}\right)} < K  M_{l \left(m_{i}\right)}
$$
and 
$$
\left\|\left\{a \left(q\right) \alpha \right\} - \left\{a \left(p\right) \alpha \right\}\right\| < \frac{1}{M_{l \left(m_{i}\right)}}  \frac{c_{2}}{4 K^{2}}.
$$
Let now $s=\frac{c_{2}}{4 K}$ then for all $M = K  M_{l \left(m_{i}\right)}$ we have
$$
\frac{1}{M} \# \left\{1 \leq p \neq q \leq M: \left\|\left\{a \left(q\right) \alpha \right\} - \left\{a \left(p\right) \alpha \right\}\right\| \leq \frac{s}{M}\right\} \geq \frac{c_{2}}{K} = 4 s,
$$
and hence
$$
R_{2} \left(\left[-s, s\right], \alpha, M \right) \not\rightarrow 2s.
$$

Second case: $\rho > 0$.\\

Let $\varepsilon := \min \left(\frac{\rho}{2}, \frac{c_{2}}{8 K^{2}}\right) > 0$. Then there exists a subsequence $m_{1}< m_{2}< m_{3} < \ldots$ of $n_{1} < n_{2} < n_{3} < \ldots$ such that
$$
0 \leq \left|M_{l \left(m_{i}\right)}  \left\|\lambda_{m_{i}} \alpha \right\| - \rho \right| < \varepsilon
$$
for all $i$. Hence there exist at least $c_{2}  M_{l \left(m_{i}\right)}$ pairs $\left(p, q\right)$ with $1 \leq p < q \leq s_{l \left(m_{i}\right)} < K  M_{l \left(m_{i}\right)}$ and
$$
\left\|\left\{a \left(q\right) \alpha \right\} - \left\{a \left(p\right) \alpha \right\}\right\| \in \left[\frac{\rho - \varepsilon}{M_{l \left(m_{i}\right)}}, \frac{\rho + \varepsilon}{M_{l \left(m _{i}\right)}} \right].
$$
Let $s_{1} := K  \left(\rho-\varepsilon \right)$ and $s_{2} := K  \left(\rho + \varepsilon\right)$, then $s_{2} - s_{1} = 2 K \varepsilon \leq \frac{c_{2}}{4 K}$. Let for $M:= K  M_{l \left(m _{i}\right)}$ and $j=1,2$:
$$
\Lambda^{(j)} := \frac{1}{M} \# \left\{1 \leq p \neq q \leq M : \left\|\left\{a \left(q\right) \alpha \right\} - \left\{a \left(p\right) \alpha \right\}\right\| \leq \frac{s_{j}}{M}\right\}.
$$
Then $\Lambda^{(2)} - \Lambda^{(1)} \geq \frac{1}{M}  c_{2}  \frac{M}{K} = \frac{c_{2}}{K}$. Hence at least one of
\begin{eqnarray*}
& & \left|\Lambda^{(2)} - 2s_{2} \right| \geq \frac{c_{2}}{8 K} \quad \mbox{or} \\
& & \left|\Lambda^{(1)} - 2s_{1} \right| \geq \frac{c_{2}}{8 K} \quad \mbox{holds},
\end{eqnarray*}
since otherwise
\begin{eqnarray*}
\frac{c_{2}}{2 K} & \leq & \left|\Lambda^{(2)} - \Lambda^{(1)} \right| - 2 \left(s_{2}-s_{1}\right) \\
& \leq & \left|\Lambda^{(2)} - 2s_{2} - \Lambda^{(1)} + 2s_{1} \right| \leq \left|\Lambda^{(2)} -2s_{2} \right|+\left|\Lambda^{(1)} -2s_{1}\right|\\
& \leq & \frac{c_{2}}{4 K},
\end{eqnarray*}
which is a contradiction. Therefore either
$$
R_{2} \left(\left[-s_{1}, s_{1}\right], \alpha, M \right) \not\rightarrow 2s_{1} \quad \mbox{or}
$$
$$
R_{2} \left(\left[-s_{2}, s_{2}\right], \alpha, M\right) \not\rightarrow 2s_{2},
$$
which proves the theorem.

\begin{acknowledgement}
The first author is supported by CERN, European Organization for Nuclear Research, Geneva, as part of the Austrian Doctoral Student Programme.\\
The second author is supported by the Austrian Science Fund (FWF), START-project Y-901.\\
The third author is supported by the Austrian Science Fund (FWF): Project F5507-N26, which is part of the Special Research Program ``Quasi-Monte Carlo Methods: Theory and Applications''.
\end{acknowledgement}

%
%

\end{document}